\theoremstyle{definition}
\newtheorem{definition}{Definition}[section]
\newtheorem{lemma}[definition]{Lemma}
\newtheorem{theorem}[definition]{Theorem}
\newtheorem{example}[definition]{Example}
\newtheorem{remark}[definition]{Remark}
\newtheorem{problem}[definition]{Problem}
\newcommand{\N}{\mathbb{N}}
\newcommand{\Zpos}{{\mathbb{Z}_+}}
\newcommand{\R}{\mathbb{R}}
\newcommand{\Z}{\mathbb{Z}}
\newcommand{\abs}[1]{{\left\vert #1 \right\vert}}
\newcommand{\digs}{\Sigma} % Symbol set consisting of digits
\newcommand{\alp}{\digs} % A general symbol set
\newcommand{\Mul}{\Pi} % Multiplication automaton
\newcommand{\mul}{g} % Local rule for multiplication automaton
\DeclareMathOperator{\fractional}{frac} % Murto-osa
\DeclareMathOperator{\config}{config} % Configuration from a real number
\DeclareMathOperator{\real}{real} % Real number from a configuration
\DeclareMathOperator{\tr}{Tr} % Trace
\DeclareMathOperator{\num}{\mathcal{N}} % The set of number-like configurations
\DeclareMathOperator{\lbound}{\ell} % The position of the leftmost non-zero coordinate
\begin{document}

\title{A natural class of cellular automata containing fractional multiplication automata, Rule~30, and others}

\author{Johan Kopra}

\affil{Department of Mathematics and Statistics, \\FI-20014 University of Turku, Finland}
\affil{jtjkop@utu.fi}

\date{}

\maketitle

\setcounter{page}{1}

\begin{abstract}
\noindent We define the class of rapidly left expansive cellular automata, which contains fractional multiplication automata, Wolfram's Rule 30, and many others. The definition has been shaped by a proposition of Jen on aperiodicity of columns in space-time diagrams of certain cellular automata, which generalizes to this new class. We also present results that originate from the theory of distribution modulo~1. 
\end{abstract}

\providecommand{\keywords}[1]{\textbf{Keywords:} #1}
\noindent\keywords{cellular automata, symbolic dynamics, distribution modulo 1, fractional multiplication automata, Wolfram's Rule~30}

\section{Introduction}

Distribution of fractional parts (i.e. distribution modulo 1) of sequences of the form $(\xi(p/q)^i)_{i\in\N}$ for $\xi>0$ and integers $p>q>1$ is a mysterious topic as demonstrated e.g. in Chapter~3 of the book~\cite{Bug12}. For example, in the case $p/q=3/2$, it is not known whether $\xi>0$ can be chosen so that fractional parts in the whole sequence remain less than $1/2$~\cite{Mah68}. One way to approach this topic is via symbolic dynamical systems called cellular automata (CA) (for a survey on CA, see~\cite{Kari05}). Indeed, multiplication by a fraction $p/q$ in base $pq$ can be implemented by a cellular automaton~\cite{Kari12a} that we denote by $\Mul_{p/q,pq}$, and then results on the distribution of fractional parts can be proven by analyzing the symbolic dynamics of this CA as in~\cite{KK17}.

Due to this connection, in this paper instead of using CA as a tool for proving results on the distribution of fractional parts, we do the opposite and use results on the distribution of fractional parts as an inspiration for new results on cellular automata. More concretely, we choose as a starting point the results saying that any fractional part repeats in the sequence $(\xi(p/q)^i)_{i\in\N}$ only finitely many times~\cite{Dub08} and that the fractional parts of this sequence have infinitely many limit points~\cite{Pis46}. Using the CA $\Mul_{p/q,pq}$ these results can be reformulated in a symbolic dynamical form. Furthermore, it is possible to give alternative, purely symbolic dynamical proofs of these results, and most importantly, these proofs are not specific to the CA $\Mul_{p/q,pq}$, so these results can be generalized to a wide class of CA that we call rapidly left expansive cellular automata. In Section~\ref{rapidSect} we present the definition of rapidly left expansive CA and prove the essential Theorem~\ref{aperThm} that has shaped this definition, a generalization of a result of Jen~\cite{Jen90} on aperiodicity of columns in space-time diagrams of certain CA. It is notable that this class includes Wolfram's Rule~30, a cellular automaton which is notoriously resistant to proofs of nontrivial results and has recently inspired Stephen Wolfram to offer prizes for the solution of certain problems concerning it~\cite{Wol19}. Our results on rapidly left expansive CA in Section~\ref{resSect} can be applied to provide new nontrivial information on the asymptotic behavior of Rule~30.

\section{Preliminaries}

We denote the set of positive integers by $\Zpos$ and define the set of natural numbers by $\N=\Zpos\cup\{0\}$. Whenever $A$ and $B$ are sets, $B^A$ denotes the collection of functions from $A$ to $B$.

We call a nonempty finite set $\alp$ of symbols an alphabet. We will assume without loss of generality that $\alp$ is equal to $\digs_n=\{0,1,\dots,n-1\}$ for some $n\in\Zpos$, so in particular $\alp$ always contains $0$. For a set $A$ and an alphabet $\alp$, we typically denote the value of a function $f\in\alp^A$ at $a\in A$ by $f[a]$ instead of $f(a)$. We denote by $0^A\in\alp^A$ the special function satisfying $0^A[a]=0$ for all $a\in A$. Bi-infinite sequences over an alphabet $\alp$ are called configurations. A configuration $x$ is formally an element of $\alp^\Z$ and therefore its value at a coordinate $i\in\Z$ is denoted by $x[i]$.

A configuration $x\in \alp^\Z$ (respectively, a sequence $x\in \alp^\N$) is periodic if there is a $p\in\Zpos$ such that $x[i+p]=x[i]$ for all $i\in\Z$ (respectively, $i\in\N$). Then we may also say that $x$ is $p$-periodic. We say that $x\in \alp^\Z$ (respectively, $x\in \alp^\N$) is eventually periodic if there are $p\in\Zpos$ and $c\in\Z$ (respectively, $c\in\N$) such that $x[i+p]=x[i]$ holds for all $i\geq c$. When $x\in\alp^\N$, such an $c\in\N$ is called a preperiod of $x$.

Any finite sequence $v=v[0] v[1]\cdots v[n-1]$, where $n\in\N$ and $v[i]\in \alp$, is a word over $\alp$. We say that the word $v$ occurs in a configuration $x\in\alp$ at position $i$ if $x[i]\cdots x[i+n-1] = v[0]\cdots v[n-1]$. The set of words of length $n$ over $\alp$ is denoted by $\alp^n$.

A sequence $(x_i)_{i\in\N}$ with $x_i\in\alp^\N$ converges to $x\in\alp^\N$ if for every $n\in\N$ there exists an $N\in\N$ such that $x_i[0,n]=x[0,n]$ for all $i\geq N$. We say that $x$ is a limit point of $(x_i)_{i\in\N}$ if some subsequence of $(x_i)_{i\in\N}$ converges to $x$. We briefly mention that these agree with the usual definitions of convergence and limit points when $\alp^\N$ is equipped with the prodiscrete topology. 

\begin{definition}
Let $\alp$ be an alphabet. We say that a map $F:\alp^\Z\to \alp^\Z$ is a \emph{cellular automaton} (with memory $m$ and anticipation $n$ for $m,n\in\N$) if there exists a map $f:\alp^{m+n+1}\to \alp$ such that $F(x)[i]=f(x[i-m],\dots,x[i],\dots,x[i+n])$ for $i\in\Z$. Such a map $f$ is an $(m,n)$ \emph{local rule} of $F$. If we can choose $r\in\N$ so that $r=m=n$, we say that $F$ is a radius-$r$ CA.
\end{definition}
Note also that if $F$ has an $(m,n)$ local rule $f:\alp^{m+n+1}\to \alp$, then $F$ is a radius-$r$ CA for $r=\max\{m,n\}$, with possibly a different local rule $f':\alp^{2r+1}\to \alp$.

One of the simplest cellular automata is the shift map $\sigma:\alp^\Z\to \alp^\Z$ defined by $\sigma(x)[i]=x[i+1]$ for $x\in \alp^\Z$, $i\in\Z$: this clearly has a $(0,1)$ local rule. In the following we present more interesting examples of cellular automata, which will be used to motivate the definition of rapidly left expansive cellular automata in the next section.

\begin{example}[Left permutive CA]
A CA $F:\alp^\Z\to\alp^\Z$ is $(m,n)$ \emph{left permutive} with $m\in\Zpos$ and $n\in\N$ if $F$ has an $(m,n)$ local rule $f:\alp^{m+n+1}\to \alp$ such that for every word $v\in\alp^{m+n}$ the function $f_v:\alp\to\alp$ defined by $f_v(a)=f(av)$ for $a\in\alp$ is a bijection.
\end{example}

\begin{example}[Elementary CA]
Elementary cellular automata (ECA) are CA $F:\digs_2^\Z\to\digs_2^\Z$ with a binary alphabet and with $(1,1)$ local rules. Their study was initiated by Wolfram in the paper~\cite{Wol83}, which also popularized a systematic naming scheme for them. Using that naming scheme, one notable example is the Rule~30 automaton $W_{30}$ with the $(1,1)$ local rule $f_{30}:\digs_2^3\to\digs_2$ defined by
\begin{flalign*}
& f_{30}(000)=0 \quad f_{30}(001)=1 \quad f_{30}(010)=1 \quad f_{30}(011)=1 \\
& f_{30}(100)=1 \quad f_{30}(101)=0 \quad f_{30}(110)=0 \quad f_{30}(111)=0.
\end{flalign*}
Rule~30 is $(1,1)$ left permutive, because both symbols of $\digs_2$ appear in each of the four columns above.
\end{example}

Before the following example, fractional multiplication CA, it is appropriate to define the notion of number-like configurations.

\begin{definition}We say that a configuration $x\in \alp^\Z$, $x\neq 0^\Z$, is \emph{number-like} if there exists an $N\in\Z$ such that $x[i]=0$ for $i<N$ . Then the minimal $N\in\Z$ such that $x[N]\neq 0$ is the \emph{left edge} of $x$, denoted by $\lbound(x)$. The set of all number-like configurations over $\alp$ is denoted by $\num(\alp)$.
\end{definition}

Number-like configurations are analogous to usual representations of positive numbers, where there may be infinitely many digits to the right of the decimal point but always a finite number of digits to the left of the decimal point (and then the representation can be extended to a bi-infinite sequence by adding an infinite sequence of zeroes to the left end).

\begin{example}[Fractional multiplication CA]
Let $n>1$. If $\xi>0$ is a real number and $\xi=\sum_{i=-\infty}^{\infty}{\xi_i n^{i}}$ is the unique base $n$ expansion of $\xi$ such that $\xi_i\neq n-1$ for infinitely many $i<0$, we define $\config_n(\xi)\in \num(\digs_n)$ by
\[\config_n(\xi)[i]=\xi_{-i-1}\]
for all $i\in\Z$. In reverse, for $x\in \num(\digs_n)$ we define
\[\real_n(x)=\sum_{i=-\infty}^{\infty}{x[-i] n^{i-1}}.\]
Clearly $\real_n(\config_n(\xi))=\xi$ and $\config_n(\real_n(x))=x$ for every $\xi>0$ and every $x\in\num(\digs_n)$ such that $x[i]\neq n-1$ for infinitely many $i>0$.

For coprime $p>q>1$ we define a $(0,1)$ local rule $\mul_{p,pq}:\digs_{pq}\times \digs_{pq}\to \digs_{pq}$ for a CA $\Mul_{p,pq}$, which performs multiplication by $p$ in base $pq$ in the sense that $\real_{n}(\Mul_{p,pq}(\config_{n}(\xi)))=p\xi$ for all $\xi>0$. Digits $a,b\in \digs_{pq}$ are represented as $a=a_1q+a_0$ and $b=b_1q+b_0$, where $a_0,b_0\in \digs_q$ and $a_1,b_1\in \digs_p$: such representations always exist and they are unique. Then
\[\mul_{p,pq}(a,b)=\mul_{p,pq}(a_1q+a_0,b_1q+b_0)=a_0p+b_1.\]
The map $\mul_{p,pq}$ encodes the usual algorithm for long multiplication by $p$ in base $pq$ (for more details, see e.g.~\cite{Kari12a,Kop21trace}). It is also possible to define a \emph{fractional multiplication automaton} $\Mul_{p/q,pq}$ that multiplies by $p/q$ in base $pq$ as the composition $\sigma^{-1}\circ \Mul_{p,pq}\circ \Mul_{p,pq}$. This CA has a $(1,1)$ local rule.
\end{example}

\begin{definition}
A space-time diagram $\theta\in\alp^{\Z\times(-\N)}$ (of a configuration $x\in\alp^\Z$ with respect to a CA $F:\alp^\Z\to\alp^\Z$) is defined by $\theta[(i,j)]=F^{-j}(x)[i]$ for $(i,j)\in\Z\times(-\N)$.
\end{definition}
The space-time diagram of $x$ with respect to $F$ is usually depicted by drawing the configurations $x,F(x),F^2(x),\dots$ on consecutive rows as in Figure~\ref{stds}.

\begin{figure}[ht]
\centering
\includegraphics[scale=0.26]{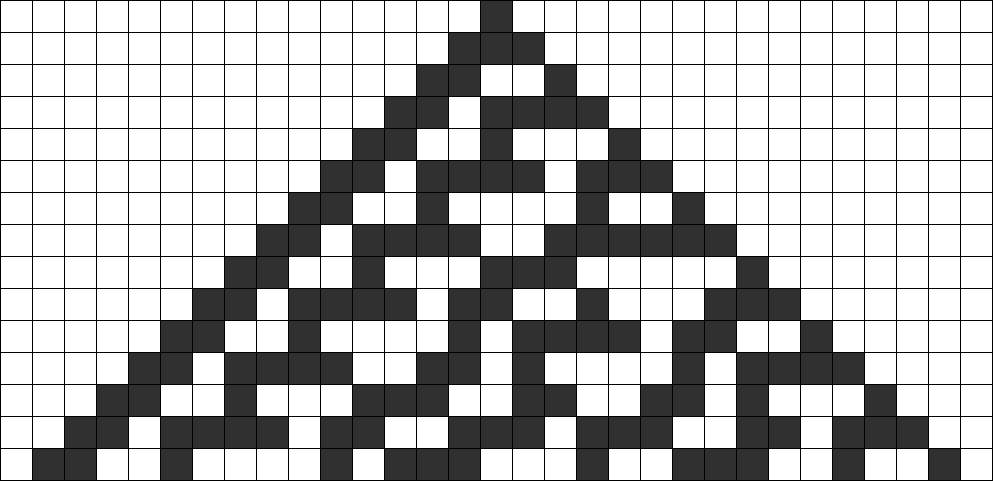}\qquad
\includegraphics[scale=0.26]{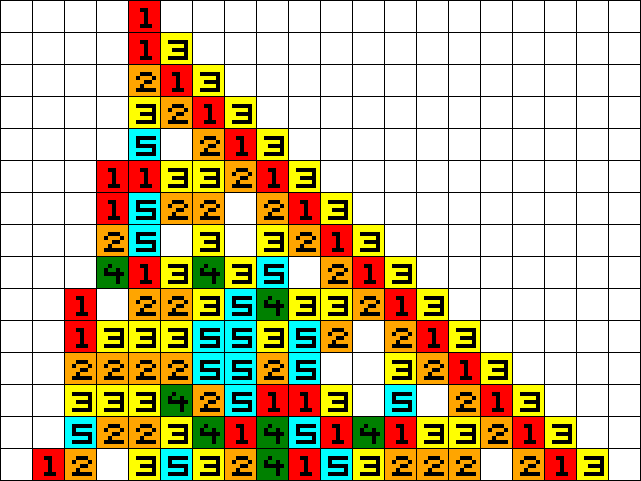}
\caption{The space-time diagrams of the configuration $\cdots 0001000\cdots$ with respect to the CA $W_{30}:\digs_2^\Z\to\digs_2^\Z$ and $\Mul_{3/2,6}:\digs_6^\Z\to\digs_6^\Z$. White and black squares correspond to digits $0$ and $1$ respectively.}
\label{stds}
\end{figure}

Given a configuration $x\in \alp^\Z$ and an interval $I=[i,j]$ with $i\leq j\in\Z$  denote $x[I]=x[i]x[i+1]\cdots x[j]\subseteq \alp^{j-i+1}$. For a CA $F:\alp^\Z\to \alp^\Z$, a configuration $x\in X$ and an interval $I=[i,j]$ with $i\leq j\in\Z$, the $I$-trace of $x$ (with respect to $F$) is the one-sided sequence $\tr_{F,I}(x)$ over the alphabet $\alp^{j-i+1}$ (i.e. the symbols of the alphabet are words over $\alp$) defined by $\tr_{F,I}(x)[t]=F^t(x)[I]$ for $t\in\N$. These correspond to columns of various width and position in the space-time diagram of $x$ with respect to $F$. If $I=\{i\}$ is the degenerate interval, we may write $\tr_{F,i}(x)$ and if $i=0$, we may write $\tr_F(x)$. If the CA $F$ is clear from the context, we may write $\tr_I(x)$.

\section{Rapidly Left Expansive Cellular Automata}\label{rapidSect}

In this section we define the class of rapidly left expansive cellular automata. This definition is strongly guided by the proof of Theorem~\ref{aperThm}, which we also give in this section. One main component of the definition is the notion of left expansivity. This is a special case of the notions of expansive and one-sided expansive directions~\cite{BL97,CK15} for more general dynamical systems, and has appeared earlier in the context of cellular automata e.g. in~\cite{JK18}.

\begin{definition}
A set of the form $R(h,d,w)=\{(i,j)\in\Z^2\mid -d\leq i\leq h, 0\leq j< w\}$ with $h,d\in\N, w\in\Zpos$ is called a \emph{rectangle} (the rectangle of height $h$, depth $d$ and width $w$, or the rectangle of dimensions $(h,d,w)$). A CA $F$ is \emph{left expansive} (with dimensions $(h,d,w)$) if for $R=R(h,d,w)$, for any pair of space-time diagrams $\theta_1,\theta_2$ with respect to $F$ and any pair of points $(i_1,j_1),(i_2,j_2)\in\Z^2$ satisfying $(i_1,j_1)+R,(i_2,j_2)+R\in\Z\times(-\N)$ the implication
\[\theta_1|_{(i_1,j_1)+R} = \theta_2|_{(i_2,j_2)+R}\implies\theta_1[(i_1-1,j_1)] = \theta_2[(i_2-1,j_2)]\]
holds, see the left hand side of Figure~\ref{rectangle}. 
\end{definition}

\begin{figure}[ht]
\centering
\begin{tikzpicture}[scale=0.6]

\draw[help lines] (0, -4) grid (9, 3);
\draw[help lines] (10, -4) grid (16, 3);

% Cellular automaton local rule shape with left radius #1 and right radius #2
\def\nhbr[#1,#2](#3,#4){\draw (#3,#4) -- ++(-#1,0) -- ++(0,1) -- ++(#1+#2+1,0) -- ++(0,-1) -- ++(-#2,0); \draw (#3,#4) -- ++(0,-1) -- ++(1,0) -- ++(0,1)}

\def\nhbrf[#1,#2](#3,#4){\fill[color=lightgray] (#3,#4) -- ++(0,-1) -- ++(1,0) -- ++(0,1); \nhbr[#1,#2](#3,#4)}

\def\nhbrfperm[#1,#2](#3,#4){\draw[pattern=north east lines] (#3,#4) ++(-#1+1,0) rectangle ++(#1+#2,1); \fill[color=lightgray] (#3,#4) ++ (-#1,0) -- ++(0,1) -- ++(1,0) -- ++(0,-1); \nhbrf[#1,#2](#3,#4)}

% Left expansive rectangle with height #1, depth #2 and width #3
\def\exp[#1,#2,#3](#4,#5){\fill[color=lightgray] (#4,#5) ++ (-1,0) -- ++(0,1) -- ++(1,0) -- ++(0,-1); \draw[thick,dashed] (#4,#5) -- ++(-1,0) -- ++(0,1) -- ++(1,0); \draw[thick] (#4,#5) ++ (0,1) -- ++(0,#1) -- ++(#3+1,0) -- ++(0,-#1-#2-1) -- ++(-#3-1,0) -- ++(0,#2+1);}

\exp[1,2,1](2,0);
\draw [decorate,decoration={brace,amplitude=5pt}] (2,1) -- ++(0,1) node[midway,xshift=-10pt]{$h$};
\draw [decorate,decoration={brace,mirror,amplitude=5pt}] (2,0) -- ++(0,-2) node[midway,xshift=-10pt]{$d$};
\draw [decorate,decoration={brace,amplitude=5pt}] (2,2) -- ++(2,0) node[midway,yshift=10pt]{$w$};

\exp[1,2,1](6,-1);

\nhbrfperm[2,1](13,1);
\draw [decorate,decoration={brace,amplitude=5pt}] (11,2) -- ++(2,0) node[midway,yshift=10pt]{$m$};
\draw [decorate,decoration={brace,amplitude=5pt}] (14,2) -- ++(1,0) node[midway,yshift=10pt]{$n$};

\nhbrfperm[2,1](13,-2);
\exp[0,1,2](12,-2);
\draw [decorate,decoration={brace,amplitude=5pt}] (12,-1) -- ++(3,0) node[midway,yshift=10pt]{$w=m+n$};
\draw [decorate,decoration={brace,mirror,amplitude=5pt}] (12,-2) -- ++(0,-1) node[midway,xshift=-18pt]{$d=1$};

\node at (11.5,1.5) {$a$};
\node[fill=white] at (13.5,1.5) {$v$};
\node at (13.5,0.5) {$b$};

\end{tikzpicture}
\caption{Left: A pair of translations of $R(h,d,w)$ enclosed by thick lines. Assuming these translated rectangles have identical contents in a space-time diagram (or even if they are in different space-time diagrams) of a left expansive CA with dimensions $(h,d,w)$, then also the symbols contained in the gray squares are identical.\\
Right: An $(m,n)$ left permutive CA is left expansive. }
\label{rectangle}
\end{figure}

Given a CA $F$ with an $(m,n)$ local rule $f$ (upper right of Figure~\ref{rectangle}) the contents of the left gray cell ($a\in\alp$) and the striped area ($v\in\alp^{m+n}$) in a space-time diagram determine the content of the bottom gray cell via $f$ by $b=f(av)$. If $F$ is additionally $(m,n)$ left permutive, the content of the left gray cell can be expressed in terms of the content of the striped area and the bottom gray cell as $a=f_v^{-1}(b)$. In particular (lower right of Figure~\ref{rectangle}), the contents of the area enclosed by thick lines determines the content of the gray cell enclosed by dashed lines, and therefore an $(m,n)$ left permutive CA is left expansive with dimensions $(0,1,m+n)$. A fractional multiplication automaton is left expansive with dimensions $(1,1,1)$ by Proposition~3.7 of~\cite{Kop21trace}. 

Intuitively left expansivity of $F:\alp^\Z\to\alp^\Z$ with dimensions $(h,d,w)$ means that there is a CA with an $(h,d)$ local rule over the alphabet $\alp^w$ that treats the columns of space-time diagrams of $F$ as configurations. The following lemma demonstrates one way to make use of left expansivity: if a sufficiently wide column in a space-time diagram in eventually periodic, so are also columns to the left of it. This corresponds to the fact that the image of an eventually periodic configuration via a CA is also eventually periodic.

\begin{lemma}\label{expPer}
If $F:\alp^\Z\to\alp^\Z$ is left expansive with dimensions $(h,d,w)$ and $x\in\alp^\Z$, $i\in\Z$ are such that $\tr_{[i,i+w-1]}(x)$ is eventually $p$-periodic with preperiod $c$, then $\tr_{[i-1,(i+w-1)-1]}(x)$ is eventually $p$-periodic with preperiod $c+h$.
\end{lemma}
\begin{proof}
Let $R=R(h,d,w)$ and let $\theta$ be the space-time diagram of $x$ with respect to $F$. Let $t\geq c+h$. By the assumption of eventual periodicity, for all $(r_1,r_2)\in R$ it holds that
\begin{flalign*}
\theta[(i,-t)+(r_1,r_2)]&=\tr_{i+r_1}(x)[t+r_2] \\
&=\tr_{i+r_1}(x)[t+p+r_2]=\theta[(i,-(t+p))+(r_1,r_2)].
\end{flalign*}
Therefore $\theta|_{(i,-t)+R}=\theta|_{(i,-(t+p))+R}$ and it follows that 
\[\tr_{i-1}(x)[t]=\theta[(i-1,-t)]=\theta[(i-1,-(t+p)]=\tr_{i-1}(x)[t+p].\]
Combining this with $\tr_{[i,i+w-1]}(x)[t]=\tr_{[i,i+w-1]}(x)[t+p]$ yields
\[\tr_{[i-1,(i+w-1)-1]}(x)[t]=\tr_{[i-1,(i+w-1)-1]}(x)[t+p].\]
Since $t\geq c+h$ is arbitrary, $\tr_{[i-1,(i+w-1)-1]}(x)$ is eventually $p$-periodic with preperiod $c+h$.
\end{proof}

Another main component for the definition of rapidly left expansive CA is the notion of a spreading speed. This is similar to a Lyapunov exponent~\cite{She92,Tis00}, but depends only on the action of the CA on the elements of $\num(\alp)$.

\begin{definition}
A CA $F:\alp^\Z\to \alp^\Z$ is \emph{left spreading} (on number-like configurations) if $F(0^\Z)=0^\Z$ and for each $x\in\num(\alp)$ there is a $t\in\Zpos$ such that $\lbound(F^t(x))<\lbound(x)$. The \emph{spreading speed} of a left spreading $F$ is
\[\sup_{x\in\num(\alp)}\limsup_{t\in\Zpos}\frac{\lbound(x)-\lbound(F^t(x))}{t}=\sup_{x\in\num(\alp)}\limsup_{t\in\Zpos}\frac{-\lbound(F^t(x))}{t}.\]
\end{definition}

Multiplying a positive real number by a fraction $p/q$ causes the base $pq$ representation to lengthen by approximately $t\log_{pq}(p/q)$ symbols to the left. Due to this fractional multiplication automata are left spreading with spreading speed $\log_{pq}(p/q)$. An elementary CA is left spreading if and only if its local rule maps the triplet $001$ to $1$, and then its spreading speed is equal to $1$.

We are now ready to present the main definition of this paper.

\begin{definition}
A CA $F:\alp^\Z\to \alp^\Z$ is \emph{rapidly left expansive} (with width $w\in\Zpos$) if
\begin{itemize}
\item $F$ is left expansive with dimensions $(h,d,w)$,
\item $F$ is left spreading with spreading speed $s$,
\item the inequality $s<1/h$ is satisfied.
\end{itemize}
\end{definition}
By the above discussion, this class of automata contains in particular all the fractional multiplication automata (because then $1/h=1>\log_{pq}(p/q)=s$) and all left permutive left spreading CA (because $1/0=\infty>s$) such as Rule~30.

The following theorem has been proved earlier in Proposition~3 of~\cite{Jen90} for left permutive left spreading elementary CA and in Proposition~3.8 of~\cite{Kop21trace} for fractional multiplication automata. We reprove it (with essentially the same proof) for general rapidly left expansive CA.

\begin{figure}[ht]
\centering
\begin{tikzpicture}[scale=0.6]

% Left expansive rectangle with height #1, depth #2 and width #3
\def\exp[#1,#2,#3](#4,#5){\fill[color=lightgray] (#4,#5) ++ (-1,0) -- ++(0,1) -- ++(1,0) -- ++(0,-1); \draw[thick,dashed] (#4,#5) -- ++(-1,0) -- ++(0,1) -- ++(1,0); \draw[thick] (#4,#5) ++ (0,1) -- ++(0,#1) -- ++(#3+1,0) -- ++(0,-#1-#2-1) -- ++(-#3-1,0) -- ++(0,#2+1);}

\fill[color=lightgray] (11,9) -- ++(-9,-9) -- ++(9,0);
\draw[thick] (11,10) -- ++(-5,-10);
\draw decorate[decoration=snake] {(10.5,10) -- (8.5,0)};

\node at (-1.5,10) {\small $x$}; \draw (0,10) -- ++(15,0);
\node at (-1.5,9) {\small $F^c(x)$}; \draw (0,9) -- ++(15,0);
\node at (-1.5,7.25) {\small $F^{N_\epsilon}(x)$}; \draw (0,7.25) -- ++(15,0);
\node at (-1.5,4.5) {\small $F^{t}(x)$}; \draw (0,4.5) -- ++(15,0);
\node at (-1.5,3) {\small $F^{c+ih}(x)$}; \draw (0,3) -- ++(15,0);

\draw[dashed] (8.25,0) -- ++(0,4.5);
\draw (8.25,0) -- ++(0,-0.5) ++(-1,-0.5) node{$-t(s+\epsilon)$};
\draw[dashed] (9.5,0) -- ++(0,4.5);
\draw (9.5,0) -- ++(0,-0.5) ++(1,-0.5) node{$\lbound(F^t(x))$};

\draw[thick] (5,10) -- ++(0,-9);
\draw (5,10) -- ++(0,0.5) ++(0,0.5) node{$-i$};
\draw [decorate,decoration={brace,mirror,amplitude=5pt}] (5,3) -- ++(0,-2) node[midway,xshift=-10pt]{$p$};
\draw (7.5,10) -- ++(0,0.5) ++(0,0.5) node{$-N$};

\draw (10.5,10) -- ++(0,0.5) ++(-0.4,0.5) node{$\lbound(x)$};
\draw[dashed] (11,10) -- ++(0,-10);
\draw (11,10) -- ++(0,0.5) ++(0,0.5) node{$0$};
\draw[dashed] (12,10) -- ++(0,-10);
\draw [decorate,decoration={brace,amplitude=5pt}] (11,10) -- ++(1,0) node[midway,yshift=10pt]{$w$};

\draw (12,9) to[out=-60, in=60] node[midway,xshift=5pt]{$p$} ++(0,-2) ;
\draw (12,7) to[out=-60, in=60] node[midway,xshift=5pt]{$p$} ++(0,-2) ;
\draw[dashed] (11,7) -- ++(1,0);
\draw (12,5) to[out=-60, in=60] node[midway,xshift=5pt]{$p$} ++(0,-2) ;
\draw[dashed] (11,5) -- ++(1,0);
\draw (12,3) to[out=-60, in=60] node[midway,xshift=5pt]{$p$} ++(0,-2) ;
\draw[dashed] (11,1) -- ++(1,0);

\end{tikzpicture}
\caption{The proof of Theorem~\ref{aperThm}. By left expansivity, all columns within the gray area are $p$-periodic. Find an $N\in\N$ such that, at all coordinates $-i$ to the left of $-N$, you can extend a line (the thick vertical line in the diagram) down to depth $p$ within the gray area but which is fully above the wavy line following the left bound $\lbound(F^t(x))$. To see that this is possible, construct a line (the thick line of slope $1/(s+\epsilon)$ in the diagram) that eventually stays above the wavy line but whose slope is greater than the slope $h$ of the boundary of the gray area.}
\label{aperTrace}
\end{figure}

\begin{theorem}\label{aperThm}
If $F:\alp^\Z\to \alp^\Z$ is rapidly left expansive with width $w\in\Zpos$ and $x\in\num(\alp)$, then $\tr_{[i,i+w-1]}(x)$ is not eventually periodic for any $i\in\Z$.
\end{theorem}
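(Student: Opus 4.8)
The plan is to argue by contradiction. Suppose $\tr_{[i,i+w-1]}(x)$ is eventually $p$-periodic for some $i\in\Z$ and some period $p\in\Zpos$, say with preperiod $c$. The first observation is that Lemma~\ref{expPer} lets us propagate periodicity leftward: applying it repeatedly, for every $k\in\N$ the trace $\tr_{[i-k,(i+w-1)-k]}(x)$ is eventually $p$-periodic, with preperiod $c+kh$. In particular, for every single coordinate $j\leq i$, the column $\tr_j(x)$ is eventually $p$-periodic, and we have explicit control on the preperiod: for $j=i-k$ the preperiod is at most $c+kh$, i.e. the preperiod grows at worst \emph{linearly} in the distance $k=i-j$ with slope $h$.

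The second ingredient is to compare this with how fast the left edge $\lbound(F^t(x))$ can move. By the spreading-speed hypothesis, $s<1/h$, so we may fix $\epsilon>0$ small enough that $s+\epsilon<1/h$. By definition of the spreading speed there is $N_\epsilon\in\N$ with $-\lbound(F^t(x))\leq t(s+\epsilon)$ for all $t\geq N_\epsilon$, so the left edge of $F^t(x)$ stays to the right of coordinate $-t(s+\epsilon)$. Now I want to derive a contradiction: if a column $\tr_j(x)$ lies \emph{entirely} to the left of every configuration $F^t(x)$ (for the relevant range of $t$), then $\tr_j(x)[t]=0$ for all those $t$; combined with eventual $p$-periodicity this forces $\tr_j(x)[t]=0$ for \emph{all} sufficiently large $t$, and then propagating periodicity even further left and using that $0^\Z$ is fixed, one concludes (after enough steps) that an arbitrarily far-left column is eventually all zeros. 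The cleanest contradiction comes from picking the index $j$ far enough to the left that the linearly-growing preperiod bound $c+(i-j)h$ is still below the line $t(s+\epsilon)$ at the moment when coordinate $-j$ first gets ``caught up'' by the spreading left edge — since the preperiod line has slope $h$ and the edge line has slope $1/(s+\epsilon)>h$, the edge line eventually overtakes the preperiod line, and this is exactly the geometry drawn in Figure~\ref{aperTrace}.

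Concretely, here is how I would organize the steps. \textbf{Step 1:} By iterating Lemma~\ref{expPer}, for each $k\in\N$ the trace $\tr_{[i-k,i+w-1-k]}(x)$ is eventually $p$-periodic with preperiod $c+kh$; in particular each single column $\tr_{i-k}(x)$ is eventually $p$-periodic with preperiod at most $c+kh$. \textbf{Step 2:} Choose $\epsilon>0$ with $s+\epsilon<1/h$ and let $N_\epsilon$ be as above, so $\lbound(F^t(x))>-t(s+\epsilon)$ for $t\geq N_\epsilon$. \textbf{Step 3:} Since $1/(s+\epsilon)>h$, choose $k$ large enough (equivalently a column index $-i+k$ far enough left) that there exists a time $t_0\geq\max\{N_\epsilon,c+kh,c+(k+1)h,\dots\}$ — more carefully, a time $t_0$ with $t_0\geq N_\epsilon$, $t_0\geq c+kh$, and $-(i-k)>-t_0(s+\epsilon)$ and in fact $-(i-k')>-t_0(s+\epsilon)$ for all $k'\geq k$ as well; this is possible precisely because the line $t\mapsto t(s+\epsilon)$ through the origin eventually dominates the line $t\mapsto c+(i-j)h$ when expressed as a bound on the coordinate. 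Then for every coordinate $j\leq i-k$ and every $t\geq t_0$ we have $j<\lbound(F^t(x))$, hence $F^t(x)[j]=0$, i.e. $\tr_j(x)[t]=0$. \textbf{Step 4:} So the column $\tr_{i-k}(x)$ is eventually $0$; but it is also eventually $p$-periodic, so it equals $0$ from some point on — and the same holds for all columns further left. \textbf{Step 5:} Derive the contradiction with $x\in\num(\alp)$, $x\neq 0^\Z$: pick the time $t$ with $\lbound(F^t(x))\leq i-k$, which exists for $t\geq t_0$ large by the spreading property (indeed $\lbound(F^t(x))\to-\infty$), and note $F^t(x)[\lbound(F^t(x))]\neq 0$ by definition of the left edge, contradicting Step 3 once that left edge has moved to or past coordinate $i-k$.

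The main obstacle is Step 3: getting the quantifiers right so that a \emph{single} choice of column index simultaneously (a) has a preperiod small enough to already be in the periodic regime by the critical time, and (b) lies to the left of the spreading front at that same time and stays there. The resolution is the slope comparison $h<1/(s+\epsilon)$, which guarantees the two relevant lines cross and that past the crossing point the ``front'' line wins; picking $k$ (and then $t_0$) past the crossing does the job, and this is precisely why the hypothesis $s<1/h$ appears in the definition of rapidly left expansive. Everything else is bookkeeping with the linear bounds from Steps 1 and 2.
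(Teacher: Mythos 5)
Your overall strategy is exactly the paper's: propagate eventual $p$-periodicity leftward via Lemma~\ref{expPer} so that the column at distance $k$ has preperiod $c+kh$, bound the spreading front by a line of slope $1/(s+\epsilon)$, and exploit $s+\epsilon<1/h$ to compare the two slopes. The ingredients and the geometric picture you describe are the right ones. However, Step~3 as written contains a genuine error: you claim that for every $j\leq i-k$ and \emph{every} $t\geq t_0$ one has $j<\lbound(F^t(x))$. This is false and cannot be derived from your Step~2 bound: the inequality $\lbound(F^t(x))\geq -t(s+\epsilon)$ keeps the front to the right of a fixed coordinate $j$ only for the \emph{bounded} range of times $t< -j/(s+\epsilon)$ (together with $t\geq N_\epsilon$), and for large $t$ the front does pass $j$ --- indeed your own Step~5 invokes $\lbound(F^t(x))\to-\infty$, so Steps~3 and~5 contradict \emph{each other} inside your argument rather than contradicting the reductio hypothesis. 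A telltale sign is that if Step~3 were true, Step~5 would give a contradiction with no use of periodicity at all; the periodicity in Step~4 becomes redundant instead of essential.

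The repair is what the paper does: show the column at $j=i-k$ is zero for all $0\leq t<c+kh+p$, i.e.\ on an initial segment covering the preperiod \emph{plus one full period}. For $N_\epsilon\leq t<c+kh+p$ this follows from $j<-t(s+\epsilon)$, which holds for all large $k$ once $(c+kh+p)(s+\epsilon)<k-i$, and that inequality is satisfied for large $k$ precisely because $h(s+\epsilon)<1$; for the finitely many times $0\leq t<N_\epsilon$ it follows by taking $k$ large enough that $j<\lbound(F^t(x))$ for those $t$. Eventual $p$-periodicity with preperiod $c+kh$ then upgrades ``zero on $[0,c+kh+p)$'' to $\tr_j(x)=0^{\N}$ for every $j\leq i-k$, and only now do you contradict left spreading as in your Step~5. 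Two smaller slips to fix along the way: the critical time must reach $c+kh+p-1$, not just $c+kh$, so that a full period of zeros is witnessed inside the periodic regime; and the inequality you want in Step~3 is $i-k<-t(s+\epsilon)$, not $-(i-k)>-t_0(s+\epsilon)$.
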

\begin{proof}
Assume that $F$ is left expansive with dimensions $(h,d,w)$ and left spreading with speed $s$ so that $s<1/h$. Assume to the contrary and without loss of generality (by replacing $x$ with $\sigma^i(x)$ if necessary) that $\tr{[0,w-1]}(x)$ is eventually $p$-periodic with preperiod $c\in\N$. A simple induction based on Lemma~\ref{expPer} shows that $\tr_{[-i,-i+w-1]}(x)$ (and in particular $\tr_{-i}(x)$) is eventually $p$-periodic with preperiod $c+ih$ for all $i\in\N$. By periodicity we have for all $i\in\N$ the implication
\[\tr_{-i}(x)[t]=0\mbox{ for }0\leq t<c+ih+p \implies \tr_{-i}(x)=0^\N.\]
Because of this, if there exists an $N\in\N$ such that the left hand side of this implication is satisfied for all $i\geq N$, then $F^t(x)[-i]=0$ whenever $t,i\in\N$, $i\geq N$, contradicting the left spreading property. We outline in Figure~\ref{aperTrace} a visual proof for the existence of such an $N$ and present more details below. 

For an $N\in\N$ as in the previous paragraph to exist it is sufficient for all sufficiently large $i\in\N$ to satisfy $-i<\lbound(F^t(x))$ whenever $0\leq t<c+ih+p$. By the left spreading property
\[s\geq\limsup_{t\in\Zpos}\frac{-\lbound(F^t(x))}{t}.\]
Thus for any $\epsilon>0$ there is an $N_{\epsilon}\in\N$ such that for all $t\geq N_{\epsilon}$ it holds that
\[s+\epsilon\geq\frac{-\lbound(F^t(x))}{t}\]
and $-t(s+\epsilon)\leq\lbound(F^t(x))$. Since by assumption $s<1/h$, we may fix $\epsilon$ so that $s+\epsilon<1/h$. To conclude it is sufficient to show that all sufficiently large $i\in\N$ satisfy $-i<-t(s+\epsilon)$ for $N_{\epsilon}\leq t<c+ih+p$ and $-i<\lbound(F^t(x))$ for $0\leq t<N_\epsilon$. The latter condition is satisfied as long as $i$ is sufficiently large and the former condition is satisfied when $(c+ih+p)(s+\epsilon)<i$. If $h=0$, this is satisfied for all sufficiently large $i$. If $h>0$, dividing both sides by $ih$ results in
\[\frac{(c+p)(s+\epsilon)}{ih}+(s+\epsilon)<\frac{1}{h}.\]
Letting $i$ tend to infinity, we see that this inequality is satisfied for all sufficiently large $i$ if $s+\epsilon<1/h$. This holds by our choice of $\epsilon$. 
\end{proof}

\begin{remark}
The shift map $\sigma:\alp^\Z\to\alp^\Z$ is left expansive with dimensions $(h,d,w)=(1,0,1)$ and left spreading with spreading speed $s=1$, but from $s=1/h$ it follows that $\sigma$ is not rapidly left expansive. The map $\sigma$ does not satisfy the conclusion of Theorem~\ref{aperThm}, because $\tr(x)$ is eventually periodic whenever $x\in\num(\alp)$ is eventually periodic.
\end{remark}

\section{Results}\label{resSect}

In this section we present two new results on rapidly left expansive cellular automata. The proofs of both of these results utilize Theorem~\ref{aperThm} as their final step. The fractional part of a number $\xi\in\R$ is
\[\fractional(\xi)=\xi-\lfloor \xi\rfloor\in[0,1).\]
By abuse of notation, for $x\in \alp^\Z$ and $c\in\Z$ we define $\fractional_c(x)\in \alp^{\N}$ by $\fractional_c(x)[i]=x[c+i]$ for all $i\in\N$ (in the special case $c=0$ the subscript $c$ may be dropped).

Whenever $\xi>0$ and $p>q>1$, it is known that $\fractional(\xi)$ can occur in the sequence $(\fractional(\xi(p/q)^i))_{i\in\N}$ only finitely many times by Lemma~2.1 of~\cite{Dub08}. Without loss of generality $p$ and $q$ are coprime, and then this is equivalent to the statement that any $x\in\num(\digs_{pq})$ can appear in the sequence $(\fractional(\Mul_{p/q,pq}^i(x))_{i\in\N}$ only finitely many times. We will now show that this result generalizes to the case where $\Mul_{p/q,pq}$ is replaced by an arbitrary rapidly left expansive cellular automaton. We first recall the Morse-Hedlund theorem.

\begin{theorem}[Morse and Hedlund, \cite{MH38}, Theorem~7.4]\label{MH}
If $x\in\alp^\N$ is not eventually periodic, then at least $n+1$ distinct words of length $n$ occur in $x$ for each $n\in\Zpos$.
\end{theorem}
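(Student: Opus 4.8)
The plan is to establish the contrapositive by analysing the \emph{complexity function} of $x$: for $n\in\Zpos$ let $\kappa(n)$ be the number of distinct words of length $n$ that occur in $x$. The first routine observation is monotonicity, $\kappa(n)\leq\kappa(n+1)$: since $x\in\alp^\N$ is an infinite one-sided sequence, every length-$n$ word occurring in $x$ occurs at some position $i$ and is therefore the length-$n$ prefix of the length-$(n+1)$ word $x[i,i+n]$; hence deleting the last symbol is a surjection from the length-$(n+1)$ words of $x$ onto the length-$n$ words of $x$. Also $\kappa(1)\geq1$ trivially. The theorem will follow once this monotonicity is upgraded to strictness for non-eventually-periodic $x$, so the real content is the implication: if $\kappa(n)=\kappa(n+1)$ for some $n$, then $x$ is eventually periodic.

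To prove that implication, assume $\kappa(n)=\kappa(n+1)$. Then the surjection above is a bijection of finite sets, so each length-$n$ word $u$ occurring in $x$ has a \emph{unique} one-symbol right extension occurring in $x$; write its last symbol as $\phi(u)$. I would next show by induction on $m\geq n$ that $\kappa(m)=\kappa(n)$: if $\kappa(m)=\kappa(n)$ but $\kappa(m+1)>\kappa(m)$, then the ``delete last symbol'' map on length-$(m+1)$ words is not injective, so some length-$m$ word has two distinct right extensions occurring in $x$; passing to the length-$n$ suffixes of those extensions produces a length-$n$ word with two distinct right extensions, contradicting uniqueness. Consequently, for every $k\geq n$ the symbol $x[k]$ equals $\phi(x[k-n,k-1])$, i.e. $x$ is determined by a sliding window of width $n$. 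Since only $\kappa(n)$ length-$n$ words occur, among the windows $x[0,n-1],x[1,n],x[2,n+1],\dots$ two coincide, say $x[i,i+n-1]=x[j,j+n-1]$ with $i<j$; an easy induction using the sliding-window determinism then gives $x[k+(j-i)]=x[k]$ for all $k\geq i$, so $x$ is eventually $(j-i)$-periodic.

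Granting this implication, the theorem is immediate: if $x$ is not eventually periodic then $\kappa(1)\geq2$ (because $\kappa(1)=1$ would make $x$ constant, hence periodic) and $\kappa(n)<\kappa(n+1)$ for every $n$ (because equality would force eventual periodicity), so $\kappa(n)\geq\kappa(1)+(n-1)\geq n+1$ for all $n\in\Zpos$.

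The step I expect to be the main obstacle is the inductive propagation of uniqueness, namely $\kappa(n)=\kappa(n+1)\Rightarrow\kappa(m)=\kappa(n)$ for all $m\geq n$, and then packaging it cleanly into an eventual period via the pigeonhole argument on windows; the monotonicity inequality and the base case $\kappa(1)\geq2$ are routine.
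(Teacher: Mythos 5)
Your proof is correct and complete. The paper does not prove this statement itself --- it is quoted as Theorem~7.4 of Morse and Hedlund \cite{MH38} and used as a black box --- but your argument is exactly the classical one: monotonicity of the complexity function $\kappa$, the observation that $\kappa(n)=\kappa(n+1)$ forces a unique right extension of every length-$n$ factor and hence $\kappa(m)=\kappa(n)$ for all $m\geq n$, sliding-window determinism plus pigeonhole to extract an eventual period, and finally $\kappa(n)\geq\kappa(1)+(n-1)\geq n+1$ by strict growth. All the steps you flagged as potential obstacles go through as you describe; nothing is missing.
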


\begin{lemma}\label{mulRepPer}
Let $F:\alp^\Z\to \alp^\Z$ be left expansive with dimensions $(h,d,w)$ and let $x\in \alp^\Z$. If $t\in\Zpos$, $N\in\N$ and $K=\abs{\alp}^{tw}$ satisfy $(h+d)K\leq tN$ and $\fractional_c(F^{it}(x))=\fractional_c(x)$ for some $c\in\Z$ and all $0\leq i\leq N$, then $x$ is eventually periodic.
\end{lemma}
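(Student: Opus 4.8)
## Proof Plan

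The plan is to leverage left expansivity together with the finiteness of the column alphabet $\alp^w$ to force genuine periodicity of $x$ from the hypothesized near-periodicity of $\fractional_c$. The key observation is that the hypothesis $\fractional_c(F^{it}(x)) = \fractional_c(x)$ for $0 \le i \le N$ says exactly that the space-time diagram of $x$, when restricted to coordinates $\ge c$, repeats with period $t$ vertically for $N+1$ consecutive blocks. I would first reformulate this in terms of traces: for each coordinate $j \ge c$, the column $\tr_j(x)$ satisfies $\tr_j(x)[it + m] = \tr_j(x)[m]$ for all $m \ge 0$ and $0 \le i \le N$ — in other words, the first $(N+1)t$ symbols of $\tr_j(x)$ form a word that is $t$-periodic. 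Equivalently, the width-$w$ column $\tr_{[c, c+w-1]}(x)$ over the alphabet $\alp^w$ begins with a $t$-periodic word of length $(N+1)t$.

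Next I would run a pigeonhole argument on the ``block columns.'' Group the rows of the space-time diagram into horizontal strips of height $t$, starting from row $0$. For a fixed horizontal position, reading a strip of $t$ consecutive rows across $w$ columns gives an element of $\alp^{tw}$; there are $K = |\alp|^{tw}$ such elements. Using left expansivity with dimensions $(h,d,w)$ one builds, just as in Lemma~\ref{expPer}, a CA over the alphabet $\alp^w$ with an $(h,d)$ local rule that propagates columns leftward. Viewing the width-$w$ columns at positions $c, c-1, c-2, \dots$ as the orbit of this derived CA, and chopping each into height-$t$ blocks, the hypothesis $(h+d)K \le tN$ is exactly what guarantees that within the first $N$ block-rows there is enough room: as we step leftward one position at a time, each step the derived local rule reaches down by at most $h$ and up by at most $d$ within a column, so after moving left far enough the ``reachable'' portion of the known $t$-periodic data still has length at least $(h+d)$ times the number of distinct block-patterns plus one — forcing, by pigeonhole, a repeated block pattern, hence (by determinism of the derived CA and the $t$-periodicity already present) that the column $\tr_{[c,c+w-1]}(x)$ is itself eventually $t$-periodic.

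More carefully: the cleanest route is to argue directly that $\tr_{[c,c+w-1]}(x)$ is eventually periodic. Suppose not. Then by the Morse–Hedlund theorem (Theorem~\ref{MH}) applied to this sequence over $\alp^w$, at least $K+1$ distinct words of length $t$ occur in it — but in its initial segment of length $(N+1)t$, which is $t$-periodic, only words appearing inside that periodic part are seen in the first $Nt$ or so positions; the point is to use the bound $(h+d)K \le tN$ to show that left expansivity propagates the periodicity far enough down the diagram that all of $\tr_{[c,c+w-1]}(x)$ (from some point on) is determined by the periodic initial data, contradicting non-eventual-periodicity. Once $\tr_{[c,c+w-1]}(x)$ is eventually periodic, Lemma~\ref{expPer} gives by induction that $\tr_{[c-i, c-i+w-1]}(x)$ is eventually periodic for every $i \in \N$, so every column of the space-time diagram is eventually periodic; in particular every row $F^n(x)$ agrees with $F^{n+p}(x)$ outside a bounded window, and combined with the eventual periodicity of each column this yields that $x = F^0(x)$ is eventually periodic (reading off row $0$).

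The main obstacle I expect is bookkeeping the exact arithmetic tying $(h+d)K \le tN$ to the pigeonhole step — specifically, tracking how far leftward (and how far down) the known periodic block-data must be propagated before a block pattern necessarily recurs, and checking that the recurrence together with determinism of the height-$t$ derived map forces true periodicity rather than merely a coincidence of finitely many blocks. Getting the indices right so that the ``height $h$, depth $d$'' reach of the derived local rule composes correctly over a strip of height $t$ (it becomes reach $(h, d)$ still, since each of the $t$ individual steps is absorbed by starting the strip one unit higher) is the delicate part; everything else is a routine application of left expansivity and Morse–Hedlund.
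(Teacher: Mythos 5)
There is a genuine gap: your pivotal intermediate claim --- that $\tr_{[c,c+w-1]}(x)$ is eventually periodic --- is simply false under the hypotheses of the lemma, so the ``cleanest route'' and everything built on it (the induction via Lemma~\ref{expPer} and the final read-off of row $0$) cannot be repaired. Concretely, take $F=\sigma^{-1}$, which is $(1,0)$ left permutive and hence left expansive with dimensions $(0,1,1)$, and take $t=1$, $N\geq K=\abs{\alp}$, and any $x$ that is constant on $[-N,\infty)$ but aperiodic on the far left. Then $\fractional(F^{i}(x))=\fractional(x)$ for $0\leq i\leq N$ and $x$ is indeed eventually periodic (to the right, as the lemma asserts), yet $\tr_0(x)[n]=x[-n]$ is not eventually periodic. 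The underlying confusion is directional: left expansivity propagates information \emph{leftward in space}, so the $t$-periodicity of the first $(N+1)t$ entries of a column constrains the columns to its left, but says nothing about the later entries of that same column; there is no ``determinism of the derived CA'' acting downward in time for your pigeonhole-plus-determinism step to invoke. Two smaller slips: the $t$-periodicity of column prefixes only holds at positions $j\geq c+r(t-1)$ (not all $j\geq c$), since one must apply $F^m$ for $m<t$ to configurations agreeing only on $[c,\infty)$; and Morse--Hedlund yields $n+1$ words of length $n$, so it gives $t+1$ (not $K+1$) distinct words of length $t$ --- the count you want requires looking at words of length $K$.

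The missing idea, which is how the paper proceeds, is to pigeonhole \emph{across spatial positions} and then turn the vertical information into horizontal information about a single row. Because each width-$w$ column prefix of length $tN$ at position $i\geq r(t-1)$ is a concatenation of copies of its length-$t$ prefix, there are at most $K=\abs{\alp}^{tw}$ distinct such prefixes as $i$ ranges over all positions $\geq r(t-1)$. Iterating left expansivity $K$ times, each such prefix of length $(h+d)K\leq tN$ determines the horizontal word $F^{hK}(x)[i-K,i-1]$ of length $K$; hence the single configuration $F^{hK}(x)$ has at most $K$ distinct factors of length $K$ in its right tail, and Morse--Hedlund applied to that \emph{row} (a sequence over $\alp$, not over $\alp^w$) shows $F^{hK}(x)$ is eventually periodic. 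Finally one pushes forward to $F^{tN}(x)$, which is therefore eventually periodic, and uses $\fractional(F^{tN}(x))=\fractional(x)$ to conclude that $x$ itself is. Your proposal never makes this horizontal turn, which is where the hypothesis $(h+d)K\leq tN$ actually earns its keep.
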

\begin{proof}
We may assume without loss of generality (by replacing $x$ with $\sigma^c(x)$ if necessary) that $c=0$. Assume that $F$ is a radius $r$ CA and define
\begin{flalign*}
& T_n=\{\tr_{[i,i+w]}(x)[0,n-1]\mid i\geq r(t-1)\} \quad \mbox{for } n\in\N, \\
& U=\{F^{hK}(x)[i-K,i-1]\mid i\geq r(t-1)\}.
\end{flalign*}
Because $\fractional(F^{it}(x))=\fractional(x)$ holds for $0\leq i\leq N$, it follows that the word $\tr_{[i,i+w-1]}(x)[0,tN-1]$ is a concatenation of $N$ copies of $\tr_{[i,i+w-1]}(x)[0,t-1]$ when $i\geq r(t-1)$. This together with the inequality $(h+d)K\leq tN$ implies that
\[K\geq\abs{T_t}=\abs{T_{tN}}\geq\abs{T_{(h+d)K}}.\]
By left expansivity the surjective mapping $T_{(h+d)K}\to U$ defined by
\[\tr_{[i,i+w]}(x)[0,(h+d)K-1]\mapsto F^{hK}(x)[i-K,i-1]\]
is well defined, so $U$ contains at most $K$ words of length $K$. By Theorem~\ref{MH} the sequence $F^{hK}(x)$ is eventually periodic. Sufficiently many applications of $F$ transform $F^{hK}(x)$ to $F^{tN}(x)$ and $\fractional(F^{tN}(x))=\fractional(x)$, so $x$ is also eventually periodic.
\end{proof}

It turns out that the assumption of $\fractional(x)$ repeating many times periodically in the sequence $(\fractional(F^{i}(x))_{i\in\N}$ can be significantly weakened.

\begin{lemma}\label{twoRepPer}
If $F:\alp^\Z\to \alp^\Z$ is left expansive, $x\in \alp^\Z$, and there exists a $t\in\Zpos$ such that $\fractional(F^t(x))=\fractional(x)$, then $x$ is eventually periodic.
\end{lemma}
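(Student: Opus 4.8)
The plan is to bootstrap Lemma~\ref{mulRepPer} from a single repetition to an arbitrarily long periodic run of repetitions. The key observation is that the hypothesis $\fractional(F^t(x)) = \fractional(x)$, i.e. $F^t(x)[i] = x[i]$ for all $i \geq 0$, already forces $\fractional(F^{it}(x)) = \fractional(x)$ for \emph{every} $i \in \N$: applying $F^t$ to both sides of $F^t(x)[i]=x[i]$ (for $i\ge0$) and using that a CA computes $F^t(x)[i]$ from coordinates near $i$ — combined with a careful induction that keeps track of how far left the ``agreement region'' must start — gives $F^{2t}(x)[i] = F^t(x)[i]$ on a slightly smaller right half-line, hence $\fractional_c(F^{2t}(x)) = \fractional_c(x)$ for some $c$ depending on $t$ and the radius, and iterating yields $\fractional_{c}(F^{it}(x)) = \fractional_{c}(x)$ for all $i \le N$ for any prescribed $N$, with $c = c(N,t,r)$ growing at most linearly in $N$.

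First I would make this propagation precise. If $F$ has radius $r$ and $F^t(x)[i] = x[i]$ for all $i \geq 0$, then since $F^t(x)[i]$ depends only on $x[i - rt], \dots, x[i + rt]$, one gets by induction that $F^{(i+1)t}(x)[k] = F^{it}(x)[k]$ whenever $k \geq rt\,i$ (each further application of $F^t$ eats $rt$ coordinates off the left end of the guaranteed-agreement zone). Chaining these equalities, $F^{it}(x)[k] = x[k]$ for all $k \geq rt\,i$ and all $i$, so in particular, setting $c_N = rt\,N$, we have $\fractional_{c_N}(F^{it}(x)) = \fractional_{c_N}(x)$ for all $0 \leq i \leq N$.

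Now I invoke Lemma~\ref{mulRepPer} with this $c_N$ and with $N$ chosen large enough. Let $(h,d,w)$ be left expansivity dimensions for $F$ and set $K = \abs{\alp}^{tw}$; Lemma~\ref{mulRepPer} requires $(h+d)K \leq tN$, which is satisfied for all $N \geq (h+d)K/t$. Since $t, h, d, w, K$ are all fixed, we may simply pick any such $N$; the corresponding $c_N$ is then a fixed integer, and the hypotheses of Lemma~\ref{mulRepPer} (with $c = c_N$) hold by the previous paragraph. The lemma concludes that $x$ is eventually periodic, as desired.

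The main obstacle is the bookkeeping in the propagation step: one must verify that applying $F^t$ repeatedly to the half-line identity $F^t(x)[i] = x[i]$ really does only shrink the agreement region linearly (by $rt$ per step) rather than, say, failing to propagate at all, and that the resulting shift $c_N$ stays finite for the fixed $N$ we need — both of which follow directly from the finite-radius (locality) property of CA, but deserve an explicit line. Everything after that is a routine citation of Lemma~\ref{mulRepPer}. Note that left expansivity is used only inside Lemma~\ref{mulRepPer}; the new content here is purely the self-improvement from one repetition to many.
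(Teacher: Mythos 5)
Your proposal is correct and follows essentially the same route as the paper: both proofs propagate the single half-line agreement $\fractional(F^t(x))=\fractional(x)$ by induction, losing a fixed amount (the radius of $F^t$) off the left end at each step, to obtain $\fractional_c(F^{it}(x))=\fractional_c(x)$ for all $0\leq i\leq N$ with a linearly growing $c$, and then invoke Lemma~\ref{mulRepPer} with $N$ large enough that $(h+d)K\leq tN$. The only difference is cosmetic: the paper phrases the induction as ``for every $N$ there is a $c$'' with $c$ increasing by the radius of $F^t$ at each step, whereas you compute the explicit bound $c_N = rtN$ directly.
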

\begin{proof}
This claim follows from Lemma~\ref{mulRepPer} if we can show that for every $N\in\Zpos$ there is a $c\in\N$ such that $\fractional_c(F^{it}(x))=\fractional_c(x)$ for all $0\leq i\leq N$. We show this by induction, so let $N\in\Zpos$ and $c\in\N$ be such that $\fractional_c(F^{it}(x))=\fractional_c(x)$ holds for all $0\leq i\leq N$. Assuming that $F^t$ has radius $r$, applying $F^t$ to the configurations $F^{it}(x)$ yields $\fractional_{c+r}(F^{(i+1)t}(x))=\fractional_{c+r}(F^t(x))$ for all $0\leq i\leq N$, or equivalently $\fractional_{c+r}(F^{it}(x))=\fractional_{c+r}(F^t(x))$ for all $1\leq i\leq N+1$. It remains to show that $\fractional_{c+r}(F^t(x))=\fractional_{c+r}(x)$, but this follows from the assumption $\fractional(F^t(x))=\fractional(x)$.
\end{proof}

\begin{figure}[ht]
\centering
\begin{tikzpicture}[scale=0.4]

\foreach \i in {-1,...,8} {\fill[color=lightgray] (2+2*\i,11) -- (2+2*\i,10-\i) -- ++(0,-1) -- ++(2,0) -- (2+2*\i+2,11);}
\fill[color=lightgray] (20,11) -- ++(0,-10) -- ++(6,0) -- ++(0,10);
\foreach \i in {-1,...,2} {\fill[color=lightgray] (2+2*\i,1) -- (2+2*\i,-\i) -- ++(0,-1) -- ++(2,0) -- (2+2*\i+2,1);}
\fill[color=lightgray] (8,1) -- ++(0,-4) -- ++(18,0) -- ++(0,4);

\draw[help lines] (0, -3) grid (26,11);
\node at (-1,10.5) {\small $x$}; \draw (0,10) -- ++(26,0); \draw (0,11) -- ++(26,0);
\node at (-1,2.5) {\small $F^j(x)$};
\node at (-1,0.5) {\small $F^t(x)$}; \draw (0,0) -- ++(26,0); \draw (0,1) -- ++(26,0);

\draw[dashed] (18,-3) -- ++(0,14);
\draw[dashed] (10,-3) -- ++(0,14);
\draw[dashed] (4,-3) -- ++(0,14);

% Left expansive rectangle with height #1, depth #2 and width #3
\def\exp[#1,#2,#3](#4,#5){\fill[color=gray] (#4,#5) ++ (-1,0) -- ++(0,1) -- ++(1,0) -- ++(0,-1); \draw[thick,dashed] (#4,#5) -- ++(-1,0) -- ++(0,1) -- ++(1,0); \draw[thick] (#4,#5) ++ (0,1) -- ++(0,#1) -- ++(#3+1,0) -- ++(0,-#1-#2-1) -- ++(-#3-1,0) -- ++(0,#2+1);}

\draw [decorate,decoration={brace,mirror,amplitude=5pt}] (10,-1) -- ++(4,0) node[midway,yshift=-10pt]{$p$};

\draw [decorate,decoration={brace,mirror,amplitude=5pt}] (0,10) -- ++(2,0) node[midway,yshift=-10pt]{$m$};

\draw (0.5,-3) -- ++(0,-0.5) ++(0,-0.5) node{$0$};
\draw (4.5,-3) -- ++(0,-0.5) ++(0,-0.5) node{$c$};
\draw (9.5,-3) -- ++(0,-0.5) ++(0,-0.5) node{$c'$};
\draw (18.5,-3) -- ++(0,-0.5) ++(0,-0.5) node{$(t-1)m$};

\exp[3,3,1](10,2);
\exp[3,3,1](14,2);

\end{tikzpicture}
\caption{A space-time diagram for the proof of Lemma~\ref{boundedPrePer}. Initially $p$-periodicity is guaranteed within the light gray area. To show $p$-periodicity up to the horizontal coordinate $c$, slide two rectangles around the diagram so that they remain at horizontal distance $p$ from each other. Identical contents within the rectangles imply identical contents within the dark gray squares. The starting point of the induction is at the horizontal coordinate $(t-1)m$ and proceeds leftwards.}
\label{pperBound}
\end{figure}

\begin{lemma}\label{boundedPrePer}
Assume that $F:\alp^\Z\to \alp^\Z$ with memory $m$ is left expansive with dimensions $(e,e,w)$ (height and depth are equal to $e$) and let $c=m(e-1)$. Then for any $x\in \alp^\Z$ and $t\in\N$ with $\fractional(x)$ and $\fractional(F^t(x))$ $p$-periodic, the sequence $\fractional_c(F^i(x))$ is $p$-periodic for all $0\leq i\leq t$.
\end{lemma}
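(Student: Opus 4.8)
The plan is to express the desired $p$-periodicity as a coincidence statement inside space-time diagrams: for each $0\le i\le t$ we want $F^i(x)[k]=F^i(x)[k+p]$ for all $k\ge c=m(e-1)$. Equivalently, writing $x'=\sigma^p(x)$ and noting $F^i(x')=\sigma^p(F^i(x))$, we want $F^i(x)$ and $F^i(x')$ to agree on all coordinates $\ge c$ for every $0\le i\le t$; the hypotheses say precisely that $x,x'$ agree on $[0,\infty)$ and that $F^t(x),F^t(x')$ agree on $[0,\infty)$. Throughout I would compare the two space-time diagrams $\theta,\theta'$ of $x,x'$ (equivalently, slide a window around $\theta$ comparing coordinate $k$ with coordinate $k+p$, as in Figure~\ref{pperBound}).

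First I would collect the consequences of plain forward propagation. Since $F$ has memory $m$, whenever two configurations agree on $[0,\infty)$ their $F$-images agree on $[m,\infty)$; iterating from the top row gives agreement of $F^j(x),F^j(x')$ on $[jm,\infty)$ for all $j\ge 0$, in particular on $[c,\infty)$ for $0\le j\le e-1$, while iterating from the bottom row gives agreement of $F^{t+s}(x),F^{t+s}(x')$ on $[sm,\infty)$ for all $s\ge 0$, in particular on $[c,\infty)$ for $0\le s\le e-1$. Combining the first fact (used for times $\le t-1$) with the hypothesis at time $t$, for every coordinate $k\ge (t-1)m$ the diagrams $\theta,\theta'$ agree at all times $0\le i\le t$. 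If $t\le e-1$ this already proves the lemma, so assume $t\ge e$.

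The core is a finite descending induction on the coordinate, proving the statement $P(k)$: ``$\theta$ and $\theta'$ agree at $(k,-i)$ for every $0\le i\le t$'', for $k$ running down from $(t-1)m$ to $c$. The base case (all $k\ge (t-1)m$) is the last sentence of the previous paragraph. For the inductive step, fix $c\le k<(t-1)m$ with $P(k')$ known for all $k'\ge k+1$, and fix a time $0\le i\le t$: if $i\le e-1$ then agreement of $\theta,\theta'$ at $(k,-i)$ follows from forward propagation and $k\ge c\ge im$, and if $i=t$ it follows from $k\ge 0$, so assume $e\le i\le t-1$. Apply left expansivity with dimensions $(e,e,w)$ to the cell of coordinate $k$ at time $i$, using the base point $(k+1,-i)$ in each of $\theta$ and $\theta'$; since $i\ge e$, the corresponding translate of $R(e,e,w)$ lies in $\Z\times(-\N)$, and it occupies coordinates $[k+1,k+w]$ over the times $i-e,\dots,i+e$. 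A cell of that translate has coordinate $a\ge k+1$ and some time $b$: if $b\le t$ then $\theta,\theta'$ agree there by $P(a)$, and if $t<b\le i+e$ then, since $s:=b-t\le i+e-t\le e-1$ gives $sm\le m(e-1)=c<k+1\le a$, they agree there by the bottom forward-propagation fact. Hence the two translates have identical contents, so left expansivity gives $\theta[(k,-i)]=\theta'[(k,-i)]$. This establishes $P(k)$, and the induction yields the lemma.

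The step I expect to require the most care is this book-keeping near the bottom row: the left-expansivity window of a cell at time $i$ reaches forward to time $i+e$, which exceeds $t$ once $i>t-e$, so one genuinely has to use the $p$-periodicity of the bottom row, propagated forward. The argument closes with exactly the stated constant because the window is only ever needed for rows $i\le t-1$, so the forward overshoot past time $t$ is at most $e-1$ steps and $(e-1)m$ equals $c$ — mirroring the $e-1$ top rows handled directly by forward propagation from $x$. A secondary thing to check against the definition is the precise placement of $R(e,e,w)$ in a space-time diagram (relative to the determined cell it sits one coordinate to the right and spans a symmetric time-window of radius $e$), since it is this ``strictly to the right'' feature that makes the descending induction on the coordinate well-founded.
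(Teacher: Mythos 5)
Your proof is correct and follows essentially the same strategy as the paper's: forward propagation via the memory $m$ from both the initial row and the row at time $t$, combined with left expansivity of dimensions $(e,e,w)$ to push $p$-periodicity one spatial step at a time down to coordinate $c=m(e-1)$. The only difference is presentational --- you run an explicit descending induction on the spatial coordinate, where the paper argues by contradiction from a maximal coordinate $c'$ at which $p$-periodicity of some row's tail fails.
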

\begin{proof}
Since $\fractional(x)$ and $\fractional(F^t(x))$ are $p$-periodic and $F$ has memory $m$, the sequences $\fractional(F^i(x))$ and $\fractional(F^{t+i}(x))$ are eventually $p$-periodic with preperiod $im$ for $0\leq i<t$. In particular, for $0\leq i<e$, they are $p$-periodic with preperiod $c$. Here we can argue using left expansivity as in Figure~\ref{pperBound}.

More precisely, to prove the lemma assume to the contrary that there are $c'\geq c$ and $e\leq j< t$ such that $\fractional_{c'}(F^j(x))$ is not $p$-periodic. Furthermore, assume that the choice of $c'$ is maximal, meaning that $\fractional_{c'+1}(F^i(x))$ is $p$-periodic for $0\leq i<t+e$. Let $R=R(e,e,w)$ and let $\theta$ be the space-time diagram of $x$ with respect to $F$. Then $\theta[(c'+1,-j)+R]=\theta[(p,0)+(c'+1,-j)+R]$ and therefore $F^j(x)[c']=\theta[(c',-j)]=\theta[(c'+p,-j)]=F^j(x)[c'+p]$. Thus $\fractional_{c'}(F^j(x))$ is $p$-periodic, a contradiction.
\end{proof}

\begin{theorem}
If $F:\alp^\Z\to \alp^\Z$ is rapidly left expansive and $x\in\num(\alp)$, then $\fractional(F^t(x))=\fractional(x)$ for finitely many $t\in\N$.
\end{theorem}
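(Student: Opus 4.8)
The plan is to argue by contradiction: I assume $\fractional(F^t(x))=\fractional(x)$ holds for infinitely many $t\in\N$, say for $t_1<t_2<\cdots$ with $t_k\to\infty$, and I arrange (by discarding $t=0$ if it occurs) that $t_1\ge 1$. The goal is to extract from this assumption a trace of the form $\tr_{[i,i+w-1]}(x)$ that is eventually periodic, which contradicts Theorem~\ref{aperThm}.

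First I would normalize the repeating pattern $\fractional(x)$. Since $t_1\in\Zpos$ and $\fractional(F^{t_1}(x))=\fractional(x)$, Lemma~\ref{twoRepPer} shows $x$ is eventually periodic, so $\fractional(x)$ is eventually $p$-periodic with some preperiod $c_0\in\N$ for some $p\in\Zpos$. I claim we may replace $x$ by $\sigma^{c_0}(x)$ without disturbing any hypothesis: $\sigma^{c_0}(x)\in\num(\alp)$, cellular automata commute with $\sigma$, and each equality $\fractional(F^{t_k}(x))=\fractional(x)$ (which just says $F^{t_k}(x)$ and $x$ agree on $[0,\infty)$) implies $\fractional(F^{t_k}(\sigma^{c_0}(x)))=\fractional(\sigma^{c_0}(x))$ because $c_0\ge 0$. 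After this replacement $\fractional(x)$ is genuinely $p$-periodic, not merely eventually so.

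Next I would feed the infinitely many repetitions into Lemma~\ref{boundedPrePer}. Write $m$ for the memory of $F$ and set $e=\max\{h,d\}$; since enlarging the rectangle $R(h,d,w)$ to $R(e,e,w)$ only strengthens the implication in the definition of left expansivity, $F$ is also left expansive with dimensions $(e,e,w)$, so Lemma~\ref{boundedPrePer} applies with $c:=m(e-1)$ and $t=t_k$ and gives that $\fractional_c(F^i(x))$ is $p$-periodic for all $0\le i\le t_k$. Since $t_k\to\infty$ while $c$ is fixed, $\fractional_c(F^i(x))$ is $p$-periodic for every $i\in\N$. The decisive point is that $\fractional_c(F^{i+1}(x))$ is determined by $\fractional_c(F^i(x))$: the latter being $p$-periodic pins down $F^i(x)$ on all of $[c,\infty)$, the local rule of $F$ then pins down $F^{i+1}(x)$ on $[c+m,\infty)$, and since $p$ consecutive entries determine a $p$-periodic sequence, this pins down $\fractional_c(F^{i+1}(x))$. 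Hence $\bigl(\fractional_c(F^i(x))\bigr)_{i\in\N}$ is a deterministic sequence valued in the finite set of $p$-periodic elements of $\alp^\N$, so it is eventually periodic: there are $P\in\Zpos$ and $i_0\in\N$ with $\fractional_c(F^{i+P}(x))=\fractional_c(F^i(x))$ for all $i\ge i_0$. In particular $\tr_{[c,c+w-1]}(x)$ is eventually $P$-periodic with preperiod $i_0$, contradicting Theorem~\ref{aperThm} applied with $i=c$.

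I expect the only genuinely delicate step to be the determinism claim $\fractional_c(F^i(x))\mapsto\fractional_c(F^{i+1}(x))$, which leans on $p$-periodicity of $\fractional_c$ twice: once to reconstruct the whole of $F^i(x)$ on $[c,\infty)$ from finitely much data, and once to recover the coordinates just to the right of $c$ that the local rule of $F$, because of its memory, cannot compute from $[c,\infty)$ alone. The remaining steps are routine bookkeeping: verifying that $\sigma^{c_0}$ preserves the hypotheses and keeps $x$ number-like, and that replacing $(h,d)$ by $(e,e)$ is legitimate so that Lemma~\ref{boundedPrePer} can be invoked.
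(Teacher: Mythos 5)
Your proof is correct and follows essentially the same route as the paper: Lemma~\ref{twoRepPer} to make $\fractional(x)$ genuinely periodic after a shift, Lemma~\ref{boundedPrePer} (with the rectangle enlarged to square dimensions) to get $p$-periodicity of every tail $\fractional_c(F^i(x))$, and Theorem~\ref{aperThm} for the final contradiction. The only difference is cosmetic: where you use determinism plus pigeonhole on the finite set of $p$-periodic tails to conclude the trace is eventually periodic, the paper lifts each tail to the unique $p$-periodic configuration $x_i$, notes $x_{i+1}=F(x_i)$ (the same determinism observation), and reads off exact periodicity from $F^t(x_0)=x_0$.
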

\begin{proof}
Assume to the contrary that $\fractional(F^t(x))=\fractional(x)$ for infinitely many $t\in\N$. In particular there is a positive such $t$ and thus by Lemma~\ref{twoRepPer} we may assume (after replacing $x$ with $\sigma^i(x)$ for a suitable $i\in\N$ if necessary) that $\fractional(x)$ is periodic. Since there are infinitely many such $t$, by Lemma~\ref{boundedPrePer} there is a $c\in\N$ such that $\fractional_c(F^i(x))$ is periodic for all $i\in\N$.

For each $i\in\N$ let $x_i$ be the unique periodic configuration that satisfies $\fractional_c(x_i)=\fractional_c(F^i(x))$. Clearly $x_{i+1}=F(x_i)$ for each $i\in\N$, from which it follows that $F^t(x_0)=x_t=x_0$. In particular $\tr_{[c,c+n]}(x)=\tr_{[c,c+n]}(x_0)$ is periodic for arbitrarily large $n\in\N$, contradicting Theorem~\ref{aperThm}.
\end{proof}

We proceed to the second main result. It is a special case of Theorem~2 in~\cite{Pis46} that, whenever $\xi>0$ and $p>q>1$, the sequence $(\fractional(\xi(p/q)^i))_{i\in\N}$ has infinitely many limit points in the interval $[0,1]$. Again without loss of generality $p$ and $q$ are coprime, and then this is equivalent to the statement that for any $x\in\num(\digs_{pq})$ the sequence $(\fractional(\Mul_{p/q,pq}^i(x))_{i\in\N}$ has infinitely many limit points in $\digs_{pq}^\N$. This result also generalizes to the case where $\Mul_{p/q,pq}$ is replaced by any rapidly left expansive cellular automaton.  

\begin{theorem}
Let $F:\alp^\Z\to \alp^\Z$ be a CA and let $x\in \alp^\Z$ be such that the sequence $\{\fractional(F^t(x))\}_{t\in\N}$ has finitely many limit points in $\alp^\N$. Then for every $w\in\N$ there is an $i\in\N$ such that $\tr_{[i,i+w]}(x)$ is eventually periodic.
\end{theorem}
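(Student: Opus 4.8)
The plan is to show that, after moving far enough to the right, each width-$(w+1)$ column over $x$ is read off from the orbit of a self-map of a finite set, and is therefore eventually periodic. Write $y_t=\fractional(F^t(x))\in\alp^\N$ and let $L=\{\ell_1,\dots,\ell_k\}$ be the set of limit points of $(y_t)_{t\in\N}$, which is finite by hypothesis and nonempty because $\alp^\N$ is compact. A routine compactness argument shows that for every open $U\supseteq L$ all but finitely many $y_t$ lie in $U$; applying this to suitable clopen neighbourhoods of $L$, and fixing $n_0$ so that $\ell_1[0,n_0],\dots,\ell_k[0,n_0]$ are pairwise distinct, we obtain for all large $t$ a well-defined index $\phi(t)$ with $y_t[0,n_0]=\ell_{\phi(t)}[0,n_0]$, and more generally, for each $n$, a threshold past which $y_t[0,n]=\ell_{\phi(t)}[0,n]$. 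Moreover $\phi$ attains every index infinitely often, since each $\ell_j$ is a genuine limit point. As $i\geq 0$ gives $\tr_{[i,i+w]}(x)[t]=F^t(x)[i,i+w]=y_t[i,i+w]=\sigma^i(\ell_{\phi(t)})[0,w]$ for all large $t$, it suffices to produce an $i\in\N$ for which the sequence $(\sigma^i(\ell_{\phi(t)}))_{t\in\N}$ is eventually periodic.

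The second step extracts a rigid relation among the $\ell_j$. Let $f$ be a local rule of $F$ with memory $m$ and anticipation $a$, and let $G:\alp^\N\to\alp^\N$ be the one-sided map $G(z)[j]=f(z[j],\dots,z[j+m+a])$, which commutes with $\sigma$. Since $F^{t+1}(x)[j]=f(F^t(x)[j-m],\dots,F^t(x)[j+a])$ depends only on nonnegative coordinates of $F^t(x)$ when $j\geq m$, one gets the exact identity $\sigma^m(y_{t+1})=G(y_t)$ for all $t\in\N$. Using that $y_t$ agrees with $\ell_{\phi(t)}$, and $y_{t+1}$ with $\ell_{\phi(t+1)}$, on any prescribed initial window once $t$ is large, it follows that $\sigma^m(\ell_{\phi(t+1)})$ and $G(\ell_{\phi(t)})$ agree on arbitrarily long prefixes as $t\to\infty$; as both range over the finite sets $\sigma^m(L)$ and $G(L)$, two such configurations agreeing on a long enough prefix must coincide, so $\sigma^m(\ell_{\phi(t+1)})=G(\ell_{\phi(t)})$ for all $t$ past some $T'$. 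In particular $G(L)\subseteq\sigma^m(L)$.

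To conclude, note that $i'\mapsto\abs{\sigma^{i'}(L)}$ is non-increasing, hence eventually equal to a constant $c$; fix any $i\in\N$ in this constant regime, so that $\sigma^m$ restricts to a bijection of $L':=\sigma^i(L)$ onto $\sigma^m(L')=\sigma^{i+m}(L)$. Set $\psi(t):=\sigma^i(\ell_{\phi(t)})\in L'$; applying $\sigma^i$ to the identity of the previous step and using $\sigma^i\circ G=G\circ\sigma^i$ gives $\sigma^m(\psi(t+1))=G(\psi(t))$ for $t\geq T'$, where $G(\psi(t))=\sigma^m(\psi(t+1))$ lies in $\sigma^m(L')$. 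Since $\sigma^m|_{L'}$ is injective, $\psi(t+1)$ is thereby a function of $\psi(t)$, say $\psi(t+1)=\Psi(\psi(t))$ for a self-map $\Psi$ of the finite set $L'$. An orbit of a self-map of a finite set is eventually periodic, so $(\psi(t))_{t\in\N}$ is eventually periodic, and hence so is $\tr_{[i,i+w]}(x)[t]=\psi(t)[0,w]$ for large $t$. This is the desired $i$.

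I expect the main obstacle to be the second step --- upgrading the soft statement ``$(y_t)$ eventually stays near the finite set $L$'' to the rigid identity $\sigma^m(\ell_{\phi(t+1)})=G(\ell_{\phi(t)})$ --- together with the realization that one genuinely cannot take $i=0$. The memory $m$ leaves an $m$-symbol ambiguity when reconstructing $\ell_{\phi(t+1)}$ from $G(\ell_{\phi(t)})$, since two limit points may differ only within their first $m$ coordinates; consequently $\phi$ itself need not be eventually periodic, and one must pass to a column far enough to the right that such collisions have been shifted out of view, i.e. into the regime where $\abs{\sigma^{i'}(L)}$ has stabilized and $\sigma^m$ acts injectively on the relevant finite set.
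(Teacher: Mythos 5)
Your proof is correct. It runs on the same engine as the paper's --- a non-increasing integer quantity bounded by the number $k$ of limit points must stabilize, and at a stable point the one-step dynamics of a column loses no information, so the column becomes the orbit of a self-map of a finite set --- but the decomposition is genuinely different. The paper never touches the limit points themselves: it works with the finitary surrogate $W_{i,j}$, the set of words occurring at window $[i,j]$ in infinitely many rows, applies the pigeonhole to the counts $f(i,j)=\abs{W_{i,j}}$ over $k$ windows nested by the radius $r$ on \emph{both} sides, and then runs a short induction showing the chosen trace is exactly $p$-periodic after discarding an initial segment of the orbit. You instead extract the exact identity $\sigma^m(\ell_{\phi(t+1)})=G(\ell_{\phi(t)})$ on the limit set via the one-sided factor map $G$ (the upgrade from approximate agreement to this identity is carried out correctly, using that both sides range over finite subsets of $\alp^\N$) and then stabilize $\abs{\sigma^{i'}(L)}$. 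Your version buys sharper bookkeeping --- only the memory $m$ matters, since the limit points extend infinitely to the right, and a single $i$ works simultaneously for every $w$ --- together with a transparent dynamical picture of what happens on the limit set; the paper's version avoids all topological bookkeeping (the index function $\phi$, thresholds, agreement on long prefixes) by replacing limit points with words occurring infinitely often, which makes the write-up shorter. Your closing observation that one cannot in general take $i=0$, because the forward map may fail to be injective on $L$ near the left edge, is precisely the obstruction that the paper's pigeonhole on $f(i,j)$ is designed to circumvent.
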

\begin{proof}
Assume to the contrary that there are $k\in\Zpos$ different limit points and assume that $F$ has radius $r\in\N$. For $0\leq i\leq j$ let
\[W_{i,j}=\{v\in \alp^{j-i+1}\mid v=F^t(x)[i,j]\mbox{ for infinitely many }t\in\N\},\quad f(i,j)=\abs{W_{i,j}}.\]
Clearly $1\leq f(i,j)\leq k$. This quantity is monotonous in the sense that if $i'\leq i$ and $j\leq j'$, then $f(i,j)\leq f(i',j')$. It is possible to fix $i\leq kr-r$ and $j\geq kr+w+r$ so that $f(i,j)=f(i+r,j-r)$, because otherwise
\[f(0,2kr+w)>f(r,(2k-1)r+w)>f(2r,(2k-2)r+w)>\cdots>f(kr,kr+w)\geq 1\]
and $f(0,kr+w+kr)\geq k+1$.

We may assume without loss of generality (by replacing $x$ with $F^t(x)$ for a sufficiently large $t\in\N$ if necessary) that $F^t(x)[i,j]\in W_{i,j}$ for all $t\in\N$.  Let $p\in\Zpos$ be such that $F^p(x)[i,j]=x[i,j]$. We will prove by induction that $\tr_{[i,j]}(x)$ is $p$-periodic, i.e. $F^{p+t}(x)[i,j]=F^t(x)[i,j]$ for all $t\in\N$. The base case $t=0$ follows by the choice of $p$, so let $t\in\N$ be such that $F^{p+t}(x)[i,j]=F^t(x)[i,j]$. Since $F$ has radius $r$, it follows that $F^{p+t+1}(x)[i+r,j-r]=F^{t+1}(x)[i+1,j-r]$. Since $f(i,j)=f(i+r,j-r)$, it follows that $F^{p+t+1}(x)[i,j]=F^{t+1}(x)[i,j]$, which proves the induction step. Since $\tr_{[i,j]}(x)$ is $p$-periodic and $j-i\geq w$, also $\tr_{[i,i+w]}(x)$ is $p$-periodic.
\end{proof}

\begin{theorem}
If $F:\alp^\Z\to \alp^\Z$ is rapidly left expansive and $x\in\num(\alp)$, then the sequence $\{\fractional(F^t(x))\}_{t\in\N}$ has infinitely many limit points in $\alp^\N$.
\end{theorem}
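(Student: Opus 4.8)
The plan is to obtain this as an immediate corollary of the preceding theorem together with Theorem~\ref{aperThm}, arguing by contradiction. First I would suppose that the sequence $\{\fractional(F^t(x))\}_{t\in\N}$ has only finitely many limit points in $\alp^\N$. Since $F$ is rapidly left expansive, I fix a width $w\in\Zpos$ witnessing this, so that in particular $F$ is left expansive with dimensions $(h,d,w)$ for some $h,d\in\N$, is left spreading with some speed $s<1/h$, and $x\in\num(\alp)$ as given.

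The second step is to apply the preceding theorem, which takes a parameter in $\N$: feeding it the value $w-1$ (a genuine element of $\N$, since $w\geq 1$) produces an index $i\in\N$ for which $\tr_{[i,i+(w-1)]}(x)=\tr_{[i,i+w-1]}(x)$ is eventually periodic. The final step is to invoke Theorem~\ref{aperThm} for this same $F$, the same width $w$, and the configuration $x\in\num(\alp)$: it asserts precisely that $\tr_{[i,i+w-1]}(x)$ is not eventually periodic for \emph{any} $i\in\Z$, hence in particular not for the $i\in\N\subseteq\Z$ just produced. This contradiction forces the set of limit points of $\{\fractional(F^t(x))\}_{t\in\N}$ in $\alp^\N$ to be infinite, which is the claim.

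The only points that need to be checked are bookkeeping: that the argument $w-1$ handed to the preceding theorem is indeed a natural number (true because widths are positive integers), and that the window $[i,i+w-1]$ it returns has exactly the length that Theorem~\ref{aperThm} controls (it does — both consist of $w$ cells). I do not expect any genuine obstacle in this theorem: the substantive work, namely reducing ``finitely many limit points'' to ``some sufficiently wide trace is eventually periodic'' on one side, and ruling out eventually periodic wide traces for number-like configurations on the other, has already been done in the preceding theorem and in Theorem~\ref{aperThm} respectively; what remains is only to align their hypotheses and conclusions.
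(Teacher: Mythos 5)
Your proposal is correct and follows essentially the same route as the paper: assume finitely many limit points, invoke the preceding theorem to produce an eventually periodic trace of width $w$, and contradict Theorem~\ref{aperThm}. The only difference is cosmetic — you feed $w-1$ into the preceding theorem to match the window length exactly, whereas the paper simply takes $w$ ``sufficiently large''; both are fine.
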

\begin{proof}
Assume to the contrary that $\{\fractional(F^t(x))\}_{t\in\N}$ has finitely many limit points. By the previous theorem for any $w\in\N$ there is an $i\in\N$ such that $\tr_{[i,i+w]}(x)$ is eventually periodic. For a sufficiently large $w$ this contradicts Theorem~\ref{aperThm}. 
\end{proof}

We conclude by noting that perhaps the most famous question~\cite{Wol19} concerning Rule~30 remains unsolved. It concerns the trace of width $1$ of a single, very simple configuration, but it is probably equally difficult for all configurations of $\num(\digs_2)$. Note that the case of trace of width 2 is covered by Theorem~\ref{aperThm}. The answer ``no'' is expected.
\begin{problem}
Let $x=\cdots0001000\cdots\in\digs_2^\Z$ be the configuration containing a single $1$ at the origin. Is $\tr_{W_{30},0}(x)$ eventually periodic?
\end{problem}

Ideally one could even solve this problem for some natural class of CA that contains Rule 30 and fractional multiplication automata (although Theorem~\ref{aperThm} already covers the latter). Unfortunately the set of rapidly left expansive CA cannot be such a class, because it contains the additive ECA Rule~90 (with a $(1,1)$ local rule $f(abc)=a+c\pmod{2}$) that produces a single eventually periodic column starting from the configuration with a single $1$ at the origin.

\section*{Acknowledgements}
The work was supported by the Finnish Cultural Foundation.

\bibliographystyle{plain}
\bibliography{mybib}{}

\begin{thebibliography}{10}

\bibitem{BL97}
Mike Boyle and Douglas Lind.
\newblock Expansive subdynamics.
\newblock {\em Transactions of the American Mathematical Society},
  349(1):55--102, 1997.

\bibitem{Bug12}
Yann Bugeaud.
\newblock {\em Distribution modulo one and Diophantine approximation}, volume
  193.
\newblock Cambridge University Press, 2012.

\bibitem{CK15}
Van Cyr and Bryna Kra.
\newblock Nonexpansive $\mathbb{Z}^2$-subdynamics and {N}ivat’s conjecture.
\newblock {\em Transactions of the American Mathematical Society},
  367(9):6487--6537, 2015.

\bibitem{Dub08}
Art{\=u}ras Dubickas.
\newblock On the powers of 3/2 and other rational numbers.
\newblock {\em Mathematische Nachrichten}, 281(7):951--958, 2008.

\bibitem{JK18}
Joonatan Jalonen and Jarkko Kari.
\newblock On dynamical complexity of surjective ultimately right-expansive
  cellular automata.
\newblock In {\em International Workshop on Cellular Automata and Discrete
  Complex Systems}, pages 57--71. Springer, 2018.

\bibitem{Jen90}
Erica Jen.
\newblock Aperiodicity in one-dimensional cellular automata.
\newblock {\em Physica D: Nonlinear Phenomena}, 45(1-3):3--18, 1990.

\bibitem{Kari05}
Jarkko Kari.
\newblock Theory of cellular automata: A survey.
\newblock {\em Theoretical computer science}, 334(1-3):3--33, 2005.

\bibitem{Kari12a}
Jarkko Kari.
\newblock Cellular automata, the {Collatz} conjecture and powers of 3/2.
\newblock In {\em International Conference on Developments in Language Theory},
  pages 40--49. Springer, 2012.

\bibitem{KK17}
Jarkko Kari and Johan Kopra.
\newblock Cellular automata and powers of $p/q$.
\newblock {\em RAIRO-Theoretical Informatics and Applications}, 51(4):191--204,
  2017.

\bibitem{Kop21trace}
Johan Kopra.
\newblock On the trace subshifts of fractional multiplication automata.
\newblock {\em Theoretical Computer Science}, 851:92--110, 2021.

\bibitem{Mah68}
Kurt Mahler.
\newblock An unsolved problem on the powers of 3/2.
\newblock {\em Journal of the Australian Mathematical Society}, 8(2):313--321,
  1968.

\bibitem{MH38}
Marston Morse and Gustav~A. Hedlund.
\newblock Symbolic {D}ynamics.
\newblock {\em Amer. J. Math.}, 60(4):815--866, 1938.

\bibitem{Pis46}
Charles Pisot.
\newblock R{\'e}partition (mod 1) des puissances successives des nombres
  r{\'e}els.
\newblock {\em Comment. Math. Helv}, 19:153--160, 1946.

\bibitem{She92}
Mark~A. Shereshevsky.
\newblock Lyapunov exponents for one-dimensional cellular automata.
\newblock {\em Journal of Nonlinear Science}, 2(1):1--8, 1992.

\bibitem{Tis00}
Pierre Tisseur.
\newblock Cellular automata and {L}yapunov exponents.
\newblock {\em Nonlinearity}, 13(5):1547, 2000.

\bibitem{Wol19}
Stephen Wolfram.
\newblock Announcing the rule 30 prizes.
\newblock
  \url{https://writings.stephenwolfram.com/2019/10/announcing-the-rule-30-prizes/#the-rule-30-prize-problems}.
\newblock Accessed 2022-02-23.

\bibitem{Wol83}
Stephen Wolfram.
\newblock Statistical mechanics of cellular automata.
\newblock {\em Reviews of modern physics}, 55(3):601, 1983.

\end{thebibliography}

\end{document}